\documentclass[dvipsnames,letterpaper, 10 pt, conference]{ieeetran}

\IEEEoverridecommandlockouts	

\usepackage{cite}
\usepackage{amsmath,amssymb,amsfonts,amsthm}
\usepackage{algorithmic}
\usepackage{textcomp}
\usepackage{graphicx,color}
\usepackage{mathrsfs}
\usepackage{tikz}
\usepackage[vlined,ruled]{algorithm2e}
\usepackage{subfigure}
\usepackage{url}
\usepackage{color}
\usepackage{dsfont}
\usepackage{bbm}
\usepackage{booktabs}
\usepackage{array}
\usepackage{yfonts}
\usepackage[normalem]{ulem}
\usepackage{arydshln,leftidx,mathtools}
\usepackage{multicol}
\usepackage{amsmath}
\usepackage{stfloats}
\usepackage{comment}
\usepackage{subfigure}
\usepackage{manyfoot}
\usepackage{enumitem}

\definecolor{myBlue}{RGB}{49,130,189}

\newtheorem{theorem}{Theorem}[section]
\newtheorem{lemma}[theorem]{Lemma}

\newtheorem{remark}{Remark}

\newcommand{\map}[3]{#1: #2 \rightarrow #3}

\usepackage{tabularx}
\usepackage{multirow}
\usepackage{makecell}

\newcommand\aamsout{\bgroup\markoverwith{\textcolor{violet}{\rule[0.5ex]{2pt}{1pt}}}\ULon}

\newcommand{\T}{\mathsf{T}} 

\newcommand{\trace}{\mathrm{tr}}

\DeclareSymbolFont{bbold}{U}{bbold}{m}{n}
\DeclareSymbolFontAlphabet{\mathbbold}{bbold}

\newcommand\oprocendsymbol{\hbox{$\square$}}
\newcommand\oprocend{\relax\ifmmode\else\unskip\hfill\fi\oprocendsymbol}

\renewcommand{\theenumi}{(\roman{enumi})}

\newcommand*{\QEDA}{\hfill\ensuremath{\blacksquare}}%

\DeclareNewFootnote{R}[roman]

\graphicspath{{figs/}}

\makeatletter
\let\NAT@parse\undefined
\makeatother
\usepackage[colorlinks,urlcolor=blue, linkcolor=blue, citecolor=blue]{hyperref}

\begin{document}

\title{\LARGE \bf Online Optimization with Unknown Time-Varying Parameters from Noisy Gradient Measurements}

\author{Shivanshu~Tripathi~and~Maziar~Raissi%
\thanks{S. Tripathi is with the Department of Electrical and Computer Engineering, and M. Raissi is with the Department of Mathematics, University of California, Riverside, Riverside, CA 92521, USA.
Emails: \href{mailto:strip008@ucr.edu}{\texttt{strip008@ucr.edu}} and
\href{mailto:maziarr@ucr.edu}{\texttt{maziarr@ucr.edu}}.}
}

\maketitle

\begin{abstract}
We study online optimization problems in which the cost function
depends on latent, time-varying parameters that are unmeasurable
and governed by unknown dynamics. Specifically, we consider a strongly
convex cost function whose linear term evolves according to unknown
linear stochastic dynamics, while the algorithm has access only to finite noisy
gradient measurements. We propose a solution that uses control theoretic
tools to reconstruct the latent parameters from gradient observations
using a Gauss-Markov estimator, then identifies the parameter dynamics using
an instrumental-variable estimator, and finally forecasts the
parameters to compute the future minimizer. We provide a 
bound on the expected tracking error. 
We illustrate the effectiveness of
our algorithm on a series of numerical examples.
\end{abstract}

\section{Introduction}
Time-varying optimization problems arise in many real-time decision-making
systems, including robotics~\cite{PMW-MP-YH-AE-NM-ADP:24},
control~\cite{DPB:18}, and signal processing~\cite{FYJ-AR:13}. In these
problems, the objective function depends on parameters that evolve over time
and are not directly observable, and the goal is to track the resulting moving
minimizer using only indirect information about the cost. This task becomes
especially challenging when the parameter trajectory, its governing dynamics,
and the available measurements are all uncertain.

Motivated by these challenges, we study online time-varying optimization with a
noisy gradient oracle. At each query point, the algorithm observes a noisy
gradient of the current objective and must use these measurements to infer the
underlying parameter evolution and track the corresponding optimizer. We propose
an online algorithm that first reconstructs the latent time-varying parameters
from short windows of noisy gradient measurements, then identifies their
dynamics and finally
predicts the parameter trajectory to compute the optimal solution. This leads to
a nonstandard identification problem: the parameters are never directly observed,
and the regressors used for dynamics estimation are themselves noisy and
correlated with the regression residual.

\textbf{Related work.}
Methods for time-varying optimization split into
\emph{unstructured} approaches, which react to incoming gradients
without modeling their evolution
\cite{MZ:03,SSS:12,ECH-RMW:15,SCH-DS-JL-PZ:21} and therefore converge
only to a neighborhood of the optimal trajectory 
\cite{AS-ED-SP-GL-GBG:20,EDA-AS-SB-LM:120}, and
\emph{structured} approaches, which exploit the temporal variability of the model 
via prediction-correction schemes
\cite{AS-AM-AK-GL-AR:16,AS-ED:17,MF-SP-VMP-AR:17},
contracting continuous-time dynamics
\cite{AD-VC-AG-GR-FB:23}, or the internal model principle,
which requires the algorithm to embed a model of the parameter dynamics
for exact tracking \cite{GB-BVS:24,GB-BVS:25a}, with applications to
unconstrained quadratic \cite{NB-RC-SZ:24}, constrained
\cite{UC-NB-RC-SZ:25}, and feedback-optimization \cite{GB-BVS:25b}
settings. All of these works assume the parameter trajectory or its
dynamics are known. Our prior work \cite{ST-AAM-FP:24} relaxed this in a
\emph{noiseless} setting where exact recovery is possible. In this work, we
consider both measurement and process noise, which turns identification
into a finite-sample estimation problem.
The resulting problem connects to finite-sample linear-system
identification \cite{MS-HM-ST-MIJ-BR:18,AT-GJP:19,TS-AR:19,SS-WH-XW:25},
but the latent parameter is unobserved and must
first be reconstructed from noisy gradients, so the regressors
share noise with the regression residual. This endogeneity biases
ordinary least squares and places the problem within errors-in-variables
and instrumental-variable system identification
\cite{TS:18,YZ-XZ-JL-NL:25,AG-KL:23}. 
Classical Kalman filtering and adaptive control
\cite{RK:60,SH-XL-PD-DS-LS:25,SL-KC-JE:24} address related state and
dynamics estimation but presume a known system matrix, and therefore do
not apply when the dynamics themselves must be identified online.

\noindent
\textbf{Contributions.} The main contributions of this paper are as follows.
First, we develop an online algorithm for tracking the minimizer of a
time-varying quadratic objective whose latent parameters evolve according
to unknown linear stochastic dynamics. The proposed method uses only a
finite number of noisy gradient measurements. Second, we derive an
upper bound on the tracking error of the predicted minimizer. The
bound quantifies the error due to parameter estimation, dynamics
identification, and stochastic process noise. 
Third, we show how the estimated time-varying parameters can
be used to compute the corresponding minimizer using standard optimization
algorithms, and we illustrate the effectiveness of the proposed method
through a series of numerical examples.

\noindent
\textbf{Notation.}
Let $\mathbb{R}$ denote the set of real numbers and $I_n$ the $n\times n$
identity matrix. For a matrix $M$, $\|M\|_2$ denotes the
Euclidean norm and $\|M\|_F$ denotes the
Frobenius norm. For a square matrix $M$, $\rho(M)$, $\lambda_{\min}(M)$,
and $\mathrm{tr}(M)$ denote its spectral radius, minimum eigenvalue, and
trace. We write $M\succeq 0$ and $M\succ 0$ for positive semidefinite and
positive definite matrices. The transpose of $M$ is denoted by $M^\top$.
$\mathbb{E}[\cdot]$ denotes expectation, and $\mathcal{N}(0,\Sigma)$
denotes a zero-mean Gaussian distribution with covariance matrix $\Sigma$.


\section{Problem formulation}
We study the time-varying optimization problem:
\begin{align}\label{eq:prob}
\min_{x\in\mathbb{R}^n}~f(x,\theta(t))=g(x)^\T \theta(t),
\end{align}
where $\map{f}{\mathbb{R}^n \times \Theta}{\mathbb{R}}$ is the cost function
with unknown, unmeasurable, time-varying parameter
$\theta(t)\in\Theta\subset\mathbb{R}^p$, and $\map{g}{\mathbb{R}^n}{\mathbb{R}^p}$
is a known vector-valued function whose entries depend on $x$.
The unknown parameter vector $\theta(t)$ evolves under unknown linear, stochastic dynamics:
\begin{align}\label{eq: dynamics}
\theta(t+1)=A\theta(t)+w_p(t),\quad w_p(t)\sim\mathcal{N}(0,Q),
\end{align}
with unknown $A\in\mathbb{R}^{p\times p}$ and $Q\succeq 0$.
At each time \(t\), given \(x(t)\in\mathbb{R}^n\), the algorithm queries a gradient oracle and obtains
\begin{align}\label{eq: measurements}
    y(x(t),t)
    &= \nabla_x f(x(t),\theta(t)) + w_m(t) \notag \\
    &= \underbrace{ \left. \frac{\partial g^\top}{\partial x} \right|_{x} }_{C(x(t))} \theta(t) + w_m(t).
\end{align}
with $w_m(t)\sim\mathcal{N}(0,R), R\succ 0$, mutually independent of $w_p(t)$. Here
$C(x):=\partial g^\T / \partial x \in \mathbb{R}^{n\times p}$ is the gradient of $g^\T$, which is
a known function of $x$ since $g$ is known.
 
We assume that a finite number of measurements are available for $t\in [0,N)$. For all $t\geq N$, the
algorithm must produce $\hat x^*(t)$ using only the available measurement data. Therefore,
we aim to design an algorithm that produces the predicted minimizer $x^*(t)$ for $t\geq N$, and
to bound the expected prediction tracking error $\mathbb{E}\lVert \hat x^*(t)-x^*(t) \rVert_2$ as a
function of the number of available measurements $N$ and the prediction horizon
$h:=t-(N-1)\geq 1$.

We impose the following assumptions:
 
\begin{itemize}
    \item[(A1)] The cost function \(f(\cdot,\theta)\) is twice continuously 
    differentiable and uniformly strongly convex in \(x\), i.e., there exists a 
    constant \(\mu>0\) such that
$        \nabla_x^2 f(x,\theta) \succeq \mu I_n,
        \forall x\in\mathbb{R}^n,\ \forall \theta\in\Theta .
    $

    \item[(A2)] The matrix \(A\) is Schur stable, \(\rho(A)<1\),
    invertible, and the pair 
    \((A,Q^{1/2})\) is controllable.

    \item[(A3)] The selected sequence \(\{x(t)\}_{t=0}^{N-1}\) is contained in a 
    bounded set \(\mathcal{X}\subset \mathbb{R}^n\), and there exists 
    \(\alpha>0\), independent of \(N\), such that
    $
        \frac{1}{N} \sum_{t=0}^{N-1} C(x(t))^\top C(x(t))
        \succeq \alpha {I_p}
    $
    for all sufficiently large \(N\).
\end{itemize}
 
Assumption~(A1) ensures the existence of a unique minimizer trajectory 
\(x^*(t)\). This minimizer is 
characterized by the first-order optimality condition 
\(\nabla_x f(x^*(\theta),\theta)=0\). By the implicit function 
theorem, the map \(\theta\mapsto x^*(\theta)\) is continuously differentiable
and \(L_*\)-Lipschitz on \(\Theta\), with 
\(L_*=\frac{1}{\mu}\sup_{x\in\mathcal{X}^*}\|C(x)\|_2$, where 
\(\mathcal{X}^*=\{x^*(\theta):\theta\in\Theta\}\).
Assumption~(A2) guarantees that 
\(\theta(t+1)
\), admit a unique stationary covariance 
\(\Sigma_\theta\) satisfying 
\(\Sigma_\theta=A\Sigma_\theta A^\top+Q\). The controllability of 
\((A,Q^{1/2})\) further implies \(\Sigma_\theta\succ0\). Assumption~(A3) is the 
standard persistence-of-excitation condition adapted to the state-dependent 
measurement matrix \(C(x)\). This is needed for 
solvability of the identification and estimation problems.

\section{Estimation of parameter dynamics and optimal solution}\label{section3}
In this section we describe the methodology to estimate the unknown parameter
$\theta(t)$ and its dynamics $A$ to solve the optimization problem \eqref{eq:prob}.
For notational convenience, let $y(t):=y(x(t),t)$, where $x(t)$ is the value at time $t$
at which the gradient $y(x,t)$ is evaluated. Let $Y$ be the data collected from the oracle
over the data collection period:
\begin{align}\label{eq:data}
Y = \begin{bmatrix} y(0)&\cdots&y(N-1) \end{bmatrix}\in\mathbb{R}^{n\times N}.
\end{align}
Our proposed approach has three stages. First, we use the Gauss-Markov theorem
to estimate $\theta(t)$ from $Y$ for $t< N$. Second, using the available 
estimates of $\theta(t)$ for $t< N$, we identify the
dynamics $A$ via instrumental variables. Third and finally, the identified dynamics are used to predict
$\theta(t)$ beyond the data-collection window, and the corresponding optimizer
is recovered from the optimality conditions.
 
\subsection{Estimation of $\theta(t)$ during data collection period}
For a fixed window length $k$ with $kn\geq p$, stack $k$ consecutive measurements as
$\bar y_t := [y(t)^\T, y(t+1)^\T, \cdots, y(t+k-1)^\T]^\T \in \mathbb{R}^{kn}$.
Substituting \eqref{eq: dynamics} into \eqref{eq: measurements} and propagating $\theta(t)$:
\begin{align}\label{eq: y}
\bar y_t = \bar C_t \theta (t) + b(t)+\bar w_{m_t},
\end{align}
where $\bar C_t:=[C(x(t))^\T,\cdots,C(x(t+k-1))^\T]^\T\in\mathbb{R}^{kn\times p}$,
$\bar w_{m_t}\sim \mathcal{N}(0,\bar R)$ with $\bar R:=I_k\otimes R$, and
\begin{align}\label{eq: bias}
b(t):=\begin{bmatrix} 0 \\ C(x(t+1)) \delta_1(t) \\ \vdots \\ C(x(t+k-1)) \delta_{k-1}(t) \end{bmatrix}
\end{align}
captures the drift over a window, with $\delta_j(t):=\theta(t+j)-\theta(t)=(A^j - I)\theta(t) +
\sum_{i=0}^{j-1} A^{j-1-i} w_p(t+i)$.

Under (A3), for any $k\geq \lceil p/n \rceil$, $\bar C_t$ has full column rank $p$, and there exists
$\alpha_k>0$ such that
\begin{align}\label{eq: C}
\bar C_t^\T \bar R^{-1} \bar C_t \succeq \alpha_k I_p
\end{align}
holds uniformly over the window. This ensures a unique solution in the noise- and drift-free
limit and yields a uniformly bounded estimator covariance
$\Sigma_\eta(t)\preceq (1/\alpha_k)I_p$.
 
We design the estimator as if $b(t)$ were absent and account for the resulting error
separately. By Gauss-Markov theorem, the best linear unbiased estimator of
$\bar y_t=\bar C_t\theta(t)+\bar w_{m_t}$ is
\begin{align}\label{eq: tilde_theta}
\tilde \theta(t):= (\bar C_t^\T \bar R^{-1} \bar C_t )^{-1} \bar C_t^\T \bar R^{-1} \bar y_t.
\end{align}
 
\begin{lemma}[Gauss--Markov]\label{label1}
For $\bar y=\bar C\theta+\bar w$ with $\bar w\sim\mathcal{N}(0,\bar R)$, $\bar R\succ 0$,
and $\bar C$ of full column rank, the linear unbiased estimator $\hat \theta=K\bar y$ that
minimizes $\mathbb{E}[(\hat \theta-\theta)(\hat \theta-\theta)^\T]$ has
$K^*= (\bar C^\T \bar R^{-1} \bar C)^{-1} \bar C^\T \bar R^{-1}$ with covariance
$(\bar C^\T \bar R^{-1} \bar C)^{-1}$.
\end{lemma}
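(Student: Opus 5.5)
The plan is the classical Gauss--Markov argument: reduce unbiasedness to a linear constraint on $K$, then minimize the covariance in the Loewner order through an orthogonal decomposition. Note that Gaussianity of $\bar w$ is not needed, only zero mean and covariance $\bar R$.

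\textbf{Unbiasedness and error covariance.} For $\hat\theta=K\bar y$ we have $\hat\theta-\theta=(K\bar C-I_p)\theta+K\bar w$. Since $\mathbb{E}[\bar w]=0$, we get $\mathbb{E}[\hat\theta]=\theta$ for every $\theta\in\mathbb{R}^p$ if and only if $K\bar C=I_p$. Under this constraint the error is $K\bar w$, so the error covariance is $\mathbb{E}[(\hat\theta-\theta)(\hat\theta-\theta)^\T]=K\bar R K^\T$. The problem therefore becomes: minimize $K\bar R K^\T$ in the positive semidefinite order over all $K$ with $K\bar C=I_p$.

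\textbf{Feasibility of $K^*$.} Since $\bar R\succ 0$ and $\bar C$ has full column rank, $M:=\bar C^\T\bar R^{-1}\bar C\succ 0$ is invertible. Hence $K^*=M^{-1}\bar C^\T\bar R^{-1}$ is well defined, and $K^*\bar C=M^{-1}M=I_p$, so $K^*$ is feasible. Its covariance is
\begin{align*}
K^*\bar R K^{*\T}=M^{-1}\bar C^\T\bar R^{-1}\bar R\bar R^{-1}\bar C M^{-1}=M^{-1}.
\end{align*}

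\textbf{Optimality of $K^*$.} Take any feasible $K$ and write $K=K^*+D$. Because both $K$ and $K^*$ satisfy the constraint, $D\bar C=0$. This cross-term identity is the only substantive step. It gives
\begin{align*}
D\bar R K^{*\T}=D\bar R\bar R^{-1}\bar C M^{-1}=D\bar C M^{-1}=0.
\end{align*}
Expanding the covariance of $K$ then yields
\begin{align*}
K\bar R K^\T=K^*\bar R K^{*\T}+D\bar R D^\T\succeq M^{-1},
\end{align*}
since $D\bar R D^\T\succeq 0$. So $K^*$ is optimal in the Loewner order, and therefore also in trace or any scalarization of it.

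\textbf{Uniqueness.} Equality $K\bar R K^\T=M^{-1}$ forces $D\bar R D^\T=0$. Because $\bar R\succ 0$, this implies $D=0$, so $K^*$ is the unique minimizer.
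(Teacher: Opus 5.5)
Your proposal is correct and matches the paper's proof essentially step for step: unbiasedness reduces to $K\bar C=I_p$, you write $K=K^*+D$ with $D\bar C=0$ so the cross term vanishes, and you obtain $K\bar R K^\T=(\bar C^\T\bar R^{-1}\bar C)^{-1}+D\bar R D^\T$. Your additions are minor but valid: you compute $K^*\bar R K^{*\T}$ explicitly, you justify $D=0$ at equality using $\bar R\succ 0$, and you note that Gaussianity is not needed.
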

 
\begin{IEEEproof}
Unbiasedness, $\mathbb{E}[K\bar y]=K\bar C\theta=\theta$ for all $\theta$, is equivalent
to $K\bar C=I$. Setting $K^\star=(\bar C^\T \bar R^{-1}\bar C)^{-1}\bar C^\T \bar R^{-1}$
gives $K^\star\bar C=I_p$. For any other unbiased $K=K^\star+\Delta$, we have
$\Delta\bar C=0$, so the cross term
$K^\star \bar R\Delta^\T=(\bar C^\T \bar R^{-1}\bar C)^{-1}(\Delta\bar C)^\T=0$ vanishes.
Hence $K\bar R K^\T= K^\star \bar R(K^\star)^\T+\Delta \bar R\Delta^\T
\succeq K^\star \bar R(K^\star)^\T=(\bar C^\T \bar R^{-1}\bar C)^{-1}$,
with equality iff $\Delta=0$.
\end{IEEEproof}
 
Substituting \eqref{eq: y} into \eqref{eq: tilde_theta} and applying Lemma~\ref{label1},
\begin{align}\label{eq: b6}
\tilde \theta(t) - \theta(t) = \underbrace{K_t^* \bar w_{m_t}}_{\eta(t): \text{noise}} + \underbrace{K_t^* b(t)}_{\beta(t): \text{bias}}.
\end{align}
The noise term $\eta(t)$ is zero-mean Gaussian with
\begin{align}\label{eq: cov}
\Sigma_\eta (t) \!=\! \mathbb{E}[\eta(t)\eta(t)^\T] \!=\! K_t^* \bar R (K_t^*)^\T\!=\! (\bar C_t^\T \bar R^{-1} \bar C_t)^{-1},
\end{align}
so by \eqref{eq: C}, $\lVert \Sigma_{\eta}(t) \rVert_2\leq 1/\alpha_k$. Jensen's inequality
then gives
\begin{align}\label{eq: b4}
\mathbb{E}\lVert \eta(t) \rVert_2 & \leq \sqrt{\trace(\Sigma_\eta(t))} \leq \sqrt{p/\alpha_k}.
\end{align}
For the bias term, $\lVert \beta(t) \rVert_2\leq\lVert K_t^*\rVert_2 \lVert b(t)\rVert_2$,
and \eqref{eq: cov} together with $\lambda_{\min}(\bar R)>0$ yields
\begin{align}\label{eq: b3}
\lVert K_t^* \rVert_2 \leq \frac{1}{\sqrt{\alpha_k \lambda_{\text{min}}(\bar R)}}.
\end{align}
From \eqref{eq: bias},
\begin{align}\label{eq: b(t)}
\lVert b(t) \rVert_2^2 \!=\! \sum_{j=1}^{k\!-\!1} \lVert C(x(t\!+\!j)) \delta_j(t) \rVert_2^2 \leq M^2\sum_{j=1}^{k\!-\!1} \lVert \delta_j(t) \rVert_2^2,
\end{align}
with $M:=\sup_{x\in\mathcal{X}} \lVert C(x) \rVert_2$. Splitting $\delta_j(t)=\delta_j^{\text{det}}(t)+\delta_j^{\text{stoch}}(t)$ with
$\delta_j^{\text{det}}(t):=(A^j-I)\theta(t)$ and
$\delta_j^{\text{stoch}}(t):=\sum_{i=0}^{j-1} A^{j-1-i} w_p(t+i)$,
and using (A2) (under which there exist $C_A\geq 1$ and
$\bar\rho\in(\rho(A),1)$ with $\lVert A^m\rVert_2\leq C_A\bar\rho^m$ for
all $m\geq 0$, so that $\lVert A^j-I\rVert_2\leq c_A j\lVert A-I\rVert_2$
for a constant $c_A$ depending only on $A$),
\begin{align*}
&\lVert \delta_j^{\text{det}} (t) \rVert_2 \leq  {c_A}\, j \lVert A-I \rVert_2 \lVert \theta(t) \rVert_2,\\
&\mathbb{E} \lVert \delta_j^{\text{stoch}}(t) \rVert_2^2 = \sum_{i=0}^{j-1} \trace (A^{j-1-i}Q(A^{j-1-i})^\T)\leq  {C_A^2}\, j\trace (Q).
\end{align*}
Substituting into \eqref{eq: b(t)} with $\sum_{j=1}^{k-1} j^2=O(k^3)$ and $\sum_{j=1}^{k-1}j=O(k^2)$ gives
\begin{align}\label{eq: b2}
\mathbb{E}\lVert b(t) \rVert_2\! \leq\! M(c_1k^{3/2}\lVert A\!-\!I \rVert_2 \lVert \theta(t) \rVert_2\!+\!c_2 k \sqrt{\trace(Q)}),
\end{align}
where $c_1$ and $c_2$ are constants. Combining \eqref{eq: b3} and \eqref{eq: b2} gives
\begin{align}\label{eq: b5}
\mathbb{E}\lVert \beta(t) \rVert_2 \leq \frac{1}{\sqrt{\alpha_k}} (C_1k^{3/2} \lVert A-I \rVert_2\lVert \theta(t)\rVert_2+C_2k\sqrt{\trace(Q)})
\end{align}
with $C_i=Mc_i/\sqrt{\lambda_{\min}(\bar R)}$ for $i\in\{1,2\}$. Using $\alpha_k\propto k$ and applying triangle inequality to \eqref{eq: b6} with \eqref{eq: b4} and \eqref{eq: b5},
\begin{align}\label{eq: trade}
\mathbb{E} \lVert \tilde \theta(t)-\theta(t) \rVert_2 \leq\underbrace{ \sqrt{p/\alpha_k} }_{\text{noise}}+\underbrace{ C_1' k \lVert A-I \rVert_2 + C_2' \sqrt{k\trace {Q}} }_{\text{bias}}
\end{align}
where $C_1'$ and $C_2'$ are constants. The two terms in \eqref{eq: trade} pull in opposite directions: larger $k$ shrinks the noise
term as $1/\sqrt{k}$ while the drift grows polynomially in $k$.
 
\subsection{Identification of dynamics $A$}
For a fixed $k$, the data collection period yields $N-k+1$ estimates
$\{\tilde\theta(t)\}_{t=0}^{N-k}$ of the form \eqref{eq: tilde_theta}, which we stack as
$\tilde\Theta:=[\tilde\theta(0),\cdots,\tilde\theta(N-k)]\in\mathbb{R}^{p\times(N-k+1)}$.
Substituting $\theta(t)=\tilde\theta(t)-\eta(t)-\beta(t)$ from \eqref{eq: b6} into
\eqref{eq: dynamics} gives
\begin{align}\label{eq: b7}
\tilde\theta(t+1)=A\tilde\theta(t)+\xi(t), \quad \forall~t\in\{0,\cdots,N-k\}
\end{align}
with regression noise
\begin{align*}
\xi(t):= \underbrace{w_p(t)}_{\text{process noise}} + \underbrace{\eta(t+1)-A\eta(t)}_{\text{measurement noise residual}} + \underbrace{\beta(t+1)-A\beta(t)}_{\text{bias residual}}.
\end{align*}
 
\begin{remark}\label{rem: OLS}
We do not use Ordinary least squares (OLS) as it requires
$\mathbb{E}[\tilde\theta(t)\xi(t)^\T]=0$, but both $\tilde\theta(t)$ and $\xi(t)$ contain
$\eta(t)$, producing the nonzero 
$\mathbb{E}[\eta(t)(-A\eta(t))^\T]=-A\Sigma_\eta(t)$ inside
$\mathbb{E}[\tilde\theta(t)\xi(t)^\T]$ by \eqref{eq: cov}. 
\end{remark}
 
We adopt an instrumental variable (IV) estimator  with instrument
$z(t):=\tilde\theta(t-k)$, motivated by: (i) $\theta(t-k)$ is correlated with $\theta(t)$
through \eqref{eq: dynamics}, so $\tilde\theta(t-k)$ is correlated with the true regressor;
(ii) $\eta(t-k)$ depends on $\{w_m(t-k),\cdots,w_m(t-1)\}$, a block strictly preceding the
noises entering $\eta(t)$ and $\eta(t+1)$, so by temporal independence of $w_m$, it is
independent of both; (iii) $w_p(t)$ is independent of the past, hence of $\tilde\theta(t-k)$.
Together, (i)--(iii) yield $\mathbb{E}[\tilde\theta(t-k)\xi(t)^\T]\approx 0$.
 
Multiplying \eqref{eq: b7} on the right by $\tilde\theta(t-k)^\T$ and taking expectation gives 
\begin{align}\label{eq: moment}
\mathbb{E}[\tilde\theta(t\!+\!1)\tilde\theta(t\!-\!k)^\T]\!=\!A\mathbb{E}[\tilde\theta(t)\tilde\theta(t\!-\!k)^\T]\!+\!\underbrace{\mathbb{E}[\xi(t)\tilde\theta(t\!-\!k)^\T]}_{\approx 0}.
\end{align}
Define
$M_0:=\mathbb{E}[\tilde\theta(t)\tilde\theta(t-k)^\T]$ and
$M_1:=\mathbb{E}[\tilde\theta(t+1)\tilde\theta(t-k)^\T]$; under (A3),
the uniform bound $K_t^\star$ in \eqref{eq: b3} and the stationarity of
$\theta(t)$ make $M_0,M_1$ uniformly bounded in $t$, and we treat them
as constant by stationarity of the underlying process $\theta(t)$.
Under (A2), $M_0$ is invertible (its leading
term is $A^k\Sigma_\theta$ with $\Sigma_\theta\succ 0$ and $A$ invertible by (A2)), so
$A=M_1 M_0^{-1}$. Replacing population expectations by sample averages over
$t\in\{k,\cdots,N-k-1\}$ yields the IV estimator
\begin{align}\label{eq: IV}
\hat A_N\!\!:=\!\! \left[\sum_{t=k}^{N\!-\!k\!-\!1}\!\!\tilde\theta(t\!+\!1)\tilde\theta(t\!-\!k)^\T \right] \! \left[\sum_{t=k}^{N\!-\!k\!-\!1}\!\!\tilde\theta(t)\tilde\theta(t\!-\!k)^\T\right]^{-1}\!\!\!\!.
\end{align}
The sum has $N-2k$ terms, where $N\geq 2k+p$. 
 
\begin{lemma}\label{lem: IV}
Under Assumptions~(A1)-(A3), the IV estimator \eqref{eq: IV} for every $\delta\in(0,1)$ satisfies
\begin{align}\label{eq: Arate}
\lVert \hat A_N \!-\! A \rVert_2\leq \underbrace{C_A\sqrt{\frac{\log(N/\delta)}{N}}}_{\text{finite-sample error}}+\underbrace{ {C_B\bigl(k\lVert A\!-\!I\rVert_2\!+\!\sqrt{k\,\trace(Q)}\bigr)}}_{\text{bias residual}},
\end{align}
with probability at least $1-\delta$, for constants $C_A,C_B$ independent of $N$.
\end{lemma}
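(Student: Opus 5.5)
The plan is to write the estimation error in the standard IV form and bound the numerator and denominator separately. Let $T:=N-2k$ and define the sample moments
\begin{align*}
\hat M_0:=\frac{1}{T}\sum_{t=k}^{N-k-1}\tilde\theta(t)\tilde\theta(t-k)^\T,\qquad \hat S:=\frac{1}{T}\sum_{t=k}^{N-k-1}\xi(t)\tilde\theta(t-k)^\T .
\end{align*}
By \eqref{eq: b7}, the numerator in \eqref{eq: IV} equals $T(A\hat M_0+\hat S)$. Hence
\begin{align*}
\hat A_N-A=\hat S\,\hat M_0^{-1},\qquad \lVert \hat A_N-A\rVert_2\le \lVert\hat S\rVert_2\,\lVert \hat M_0^{-1}\rVert_2 .
\end{align*}
This reduces the proof to two tasks: a lower bound on $\sigma_{\min}(\hat M_0)$, and an upper bound on $\lVert\hat S\rVert_2$. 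The upper bound splits along the three components of $\xi(t)$.

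\textbf{Denominator.} I would write $\hat M_0=M_0+(\hat M_0-M_0)$ and first show that $\sigma_{\min}(M_0)\ge c_0>0$. The leading term of $M_0$ is $A^k\Sigma_\theta$, which is invertible by (A2) since $\Sigma_\theta\succ0$. The remaining contributions come from the bias terms $\beta$, which are correlated across lags; they are small when $k\lVert A-I\rVert_2+\sqrt{k\trace(Q)}$ is small, so the bias regime stays in the second term of \eqref{eq: Arate}. Next, $\hat M_0-M_0$ is a normalized sum of products of jointly Gaussian, geometrically mixing processes: $\theta$ is a stable Gauss--Markov chain by (A2), and $\eta$ is $k$-dependent with covariance bounded by \eqref{eq: cov}. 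I would split the time index into blocks spaced $k$ (plus a mixing length) apart, apply a Hanson--Wright/Bernstein inequality for Gaussian quadratic forms on each block, and take a union bound over an $\epsilon$-net of the unit sphere in $\mathbb{R}^p$. This gives $\lVert\hat M_0-M_0\rVert_2\le c\sqrt{\log(N/\delta)/N}$ with probability $1-\delta/2$. For $N$ large this is at most $c_0/2$, so $\lVert\hat M_0^{-1}\rVert_2\le 2/c_0$. Small $N$ is absorbed into the constant $C_A$.

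\textbf{Numerator.} I would decompose $\hat S=\hat S_w+\hat S_\eta+\hat S_\beta$ according to $\xi(t)=w_p(t)+(\eta(t+1)-A\eta(t))+(\beta(t+1)-A\beta(t))$.
\begin{itemize}
\item For $\hat S_w$ and $\hat S_\eta$, the observations (ii)--(iii) preceding \eqref{eq: moment} say that $\tilde\theta(t-k)$ is independent of $w_p(t)$, $\eta(t)$ and $\eta(t+1)$. These terms are therefore zero-mean sums of products of independent Gaussians, with martingale-type structure after splitting into $k$ interleaved subsequences. The same concentration-plus-net argument gives $\lVert\hat S_w+\hat S_\eta\rVert_2\le c\sqrt{\log(N/\delta)/N}$ with probability $1-\delta/2$. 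The variance proxy uses $\trace(Q)$, $\lVert\Sigma_\eta\rVert_2\le1/\alpha_k$, and $\sup_t\mathbb{E}\lVert\tilde\theta(t)\rVert_2^2<\infty$, which follows from stationarity and \eqref{eq: b3}.
\item For $\hat S_\beta$, no independence is available because $\beta$ shares $\theta$ and $w_p$ with the instrument. I would instead bound it crudely with Cauchy--Schwarz:
\begin{align*}
\lVert\hat S_\beta\rVert_2\le (1+\lVert A\rVert_2)\Bigl(\tfrac1T\textstyle\sum\lVert\beta\rVert_2^2\Bigr)^{1/2}\Bigl(\tfrac1T\textstyle\sum\lVert\tilde\theta\rVert_2^2\Bigr)^{1/2}.
\end{align*}
The second-moment version of \eqref{eq: b5} and \eqref{eq: trade}, with $\alpha_k\propto k$, bounds the first factor by $O(k\lVert A-I\rVert_2+\sqrt{k\trace(Q)})$. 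The second factor is bounded with high probability, again by concentration.
\end{itemize}
Combining the pieces and absorbing $2/c_0$ and the moment bounds into $C_A$ and $C_B$ yields \eqref{eq: Arate}.

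\textbf{Main obstacle.} I expect the hardest step to be the concentration for dependent data. Both $\hat M_0-M_0$ and $\hat S_\eta$ involve sums whose summands are correlated over lags up to $k$ and beyond, through the mixing of $\theta$. I would first try the blocking/subsequence decomposition into $k+1$ interleaved, nearly independent sequences, then control the residual correlation of $\theta$ across blocks through $\lVert A^m\rVert_2\le C_A\bar\rho^m$. A secondary subtlety is keeping $\sigma_{\min}(M_0)$ bounded away from zero once the bias contributions are included. If that fails, I would restrict the statement to the regime where the bias term is below $\sigma_{\min}(A^k\Sigma_\theta)/2$, since outside it the bound is trivial for a large enough $C_B$.
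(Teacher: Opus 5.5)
Your main line is sound, to the same standard of rigor as the paper, but it is organized differently. The paper never writes $\hat A_N-A$ exactly. Instead it:
\begin{itemize}
\item linearizes $\hat M_1\hat M_0^{-1}$ around the population ratio $M_1M_0^{-1}$;
\item gets the $\sqrt{\log(N/\delta)/N}$ term from concentration of both full moment matrices $\hat M_0,\hat M_1$ (bias parts included), multiplied by $\|M_0^{-1}\|_2$ and $\|M_1\|_2$;
\item isolates the bias as the deterministic population quantity $\mathcal{E}=M_1M_0^{-1}-A=\mathbb{E}[(\beta(t+1)-A\beta(t))\tilde\theta(t-k)^\top]M_0^{-1}$.
\end{itemize}
You use the exact identity $\hat A_N-A=\hat S\hat M_0^{-1}$ instead. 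This buys three things. No $\mathcal{O}(\|\hat M-M\|_2^2)$ remainder has to be controlled. Concentration of $\hat M_0$ is needed only to certify $\sigma_{\min}(\hat M_0)\ge c_0/2$, not to produce the rate. The rate comes solely from the zero-mean cross terms $\hat S_w,\hat S_\eta$, which makes explicit that properties (ii)--(iii) of the instrument are what remove the mean of everything except the bias. The price is that your bias term $\hat S_\beta$ is a random sample average rather than a population expectation. You therefore need a high-probability bound on $\frac1T\sum_t\|\beta(t)\|_2^2$, not just the moment bound from \eqref{eq: b5}. With deterministic query points, $\beta(t)$ is linear in Gaussian variables, so a Hanson--Wright bound supplies this.

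One step in your plan is wrong as stated. You claim that outside the small-bias regime the bound is ``trivial for a large enough $C_B$,'' and that small $N$ can be ``absorbed into $C_A$.'' Neither works, because $\hat A_N$ has no a priori bound. On any event where $\hat M_0$ is ill-conditioned, $\|\hat A_N-A\|_2$ can be arbitrarily large. If $M_0$ itself were singular, $\sigma_{\min}(\hat M_0)$ would typically be of order $N^{-1/2}$, and no $N$-independent constants could give the bound. The honest fix is to carry $\sigma_{\min}(M_0)\ge c_0>0$ and $N\ge N_0(\delta)$ as explicit hypotheses. The paper's proof tacitly needs the same two conditions: it asserts that $\|M_0^{-1}\|_2$ is bounded and drops the second-order remainder. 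So this is not a defect relative to the paper, but you should not claim the complementary regime is trivial.
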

 
\begin{IEEEproof}
We express the error $\hat A_N-A$ as finite-sample error and a bias error, using first-order 
expansion as
\begin{align*}
\hat A_N&-M_1M_0^{-1}=(\hat M_1-M_1)M_0^{-1}\\
&\quad -M_1M_0^{-1}(\hat M_0-M_0)M_0^{-1}+\mathcal{O}(\lVert \hat M-M\rVert_2^2).
\end{align*}
Under (A2), $\{\tilde\theta(t)\}$ is a stable, geometrically-mixing Gaussian sequence, so
sub-Gaussian concentration gives $\lVert \hat M_i-M_i\rVert_2\leq c\sqrt{\log(N/\delta)/N}$
with high probability; together with the boundedness of $\lVert M_0^{-1}\rVert_2$ and
$\lVert M_1\rVert_2$ from (A2)-(A3), this yields the first term of \eqref{eq: Arate}.
The bias residual $\mathcal{E}:=M_1 M_0^{-1}-A$ arises from
$\mathbb{E}[(\beta(t+1)-A\beta(t))\tilde\theta(t-k)^\T]$; by \eqref{eq: b5} {,
the steady-state boundedness of $\tilde\theta(t-k)$, and the additive
structure of the per-step bias in \eqref{eq: trade},
$\lVert\mathcal{E}\rVert_2\leq C_B\bigl(k\lVert A-I\rVert_2+\sqrt{k\,\trace(Q)}\bigr)$,}
giving the second term.
Combining the components via triangle inequality completes the proof. 
\end{IEEEproof}
 
\subsection{Computing optimal solution}

For $t\geq N$, no measurements are available, so we propagate the most recent 
estimate $\tilde\theta(N-k)$ forward through the identified dynamics:
\begin{align}\label{eq: forward}
\hat\theta(t):=\hat A_N^{\,h+k-1}\,\tilde\theta(N-k),\qquad h=t-(N-1).
\end{align}
The predicted minimizer $\hat x^*(t)$ is the unique solution of
\begin{align}\label{eq: final}
C(\hat x^*(t))\,\hat\theta(t)=0,
\end{align}
which, by (A1), is the gradient-zero condition of a strongly convex function and admits
a unique solution. We now bound the expected tracking error against $x^*(t)$.

\begin{theorem}
Under (A1)--(A3) and $N\geq 2k+p$, the predicted minimizer $\hat x^*(t)$ in \eqref{eq: final}
satisfies, for $t\geq N$ and every $\delta\in(0,1)$,  
which holds with probability at least $1-\delta$,
 \begin{align}\label{eq: c1}
& \mathbb{E}\lVert \hat x^*(t)-x^*(t) \rVert_2 \leq L_* \bigg[C_1' H \bar \rho^{H-1} \bigl(\sqrt{\log(N/\delta)/N}\notag\\
 &{+\,k\lVert A\!-\!I\rVert_2+\sqrt{k\,\trace(Q)}}\bigr) +\lVert A^H \rVert_2 (\sqrt{p/\alpha_k} \notag \\
 & \!+\! C_1' k \lVert A\!\! - \!\! I \rVert_2\!+\!C_2'\sqrt{k\trace(Q)})\!+\!\sqrt{\trace {\textstyle\sum_{j=0}^{H-1}} A^j Q (A^j)^\T} \bigg]
 \end{align}
where $L_*=(1/\mu)\sup_{x\in\mathcal{X}^*}\lVert C(x) \rVert_2$, $H=t-(N-k)$, and
$C_1',C_2'$ are constants. 
\end{theorem}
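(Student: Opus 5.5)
The plan is to move the problem from $x$-space to $\theta$-space using the Lipschitz property of $\theta\mapsto x^*(\theta)$. Then I split the parameter prediction error into three pieces: model-identification error, initial-estimate error, and accumulated process noise. Each piece is bounded with an earlier estimate.

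Since $\hat x^*(t)$ solves $C(\hat x^*(t))\hat\theta(t)=0$, we have $\hat x^*(t)=x^*(\hat\theta(t))$. Likewise $x^*(t)=x^*(\theta(t))$. By (A1) and the implicit-function argument after the assumptions, $\|\hat x^*(t)-x^*(t)\|_2\le L_*\|\hat\theta(t)-\theta(t)\|_2$. (If $\hat\theta(t)$ leaves $\Theta$, I would take $\Theta$ large enough or project.)

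Next, I write $H=h+k-1=t-(N-k)$ and unroll the dynamics from time $N-k$:
\begin{align*}
\theta(t)=A^H\theta(N-k)+\textstyle\sum_{j=0}^{H-1}A^j w_p(t-1-j).
\end{align*}
Then
\begin{align*}
\hat\theta(t)-\theta(t)=(\hat A_N^H-A^H)\tilde\theta(N-k)+A^H\bigl(\tilde\theta(N-k)-\theta(N-k)\bigr)-\textstyle\sum_{j=0}^{H-1}A^jw_p(t-1-j).
\end{align*}
I bound the three terms separately and combine them with the triangle inequality.

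\emph{Noise term.} The last term is zero-mean Gaussian with covariance $\sum_{j=0}^{H-1}A^jQ(A^j)^\T$. Jensen's inequality gives $\sqrt{\trace\sum_j A^jQ(A^j)^\T}$ for its expected norm.

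\emph{Estimation term.} For the middle term I use $\|A^H\|_2$ times \eqref{eq: trade} at $t=N-k$. This yields $\|A^H\|_2\bigl(\sqrt{p/\alpha_k}+C_1'k\|A-I\|_2+C_2'\sqrt{k\trace Q}\bigr)$.

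\emph{Identification term.} This is the main obstacle. I would use the telescoping identity $\hat A_N^H-A^H=\sum_{i=0}^{H-1}\hat A_N^{i}(\hat A_N-A)A^{H-1-i}$. On the event of Lemma~\ref{lem: IV}, $\|\hat A_N-A\|_2$ is at most the right side of \eqref{eq: Arate}. For $N$ large and $k\|A-I\|_2$, $k\trace Q$ small, this perturbation is small enough that $\|\hat A_N^i\|_2\le C_A\bar\rho^i$ with the same $\bar\rho<1$ (possibly after slightly enlarging $\bar\rho$). That estimate requires a robustness argument for the power bound, for example through a Lyapunov-norm bound in which $\|A\|_P\le\bar\rho'<\bar\rho$ and small perturbations keep $\|\hat A_N\|_P\le\bar\rho$. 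With it, $\|\hat A_N^H-A^H\|_2\le C_A^2H\bar\rho^{H-1}\|\hat A_N-A\|_2$.

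I then multiply by $\|\tilde\theta(N-k)\|_2$. This factor is bounded in expectation by stationarity of $\theta$, since $\mathbb{E}\|\theta\|_2\le\sqrt{\trace\Sigma_\theta}$, together with \eqref{eq: trade}. A second difficulty is that $\hat A_N$ and $\tilde\theta(N-k)$ are dependent. I would handle this by conditioning on the high-probability event, where the operator bound holds deterministically, and bounding $\mathbb{E}\|\tilde\theta(N-k)\|_2$ separately. The resulting constants, together with $C_A,C_B$ from Lemma~\ref{lem: IV}, are absorbed into $C_1'$.

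Summing the three bounds and multiplying by $L_*$ gives \eqref{eq: c1}, holding with probability at least $1-\delta$.
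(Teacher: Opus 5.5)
Your proposal is correct and follows essentially the same route as the paper. Both arguments use the Lipschitz reduction to $\theta$-space, the unrolling from $N-k$ with the three-way split into model, anchor, and future-noise errors, the telescoping identity for $\hat A_N^H-A^H$ with conditioning on the identification event of Lemma~\ref{lem: IV}, $\|A^H\|_2$ times \eqref{eq: trade}, and Jensen for the noise term. Your Lyapunov-norm robustness argument, and your explicit requirement that the non-vanishing bias residual $C_B(k\|A-I\|_2+\sqrt{k\,\trace(Q)})$ be small, are slightly more careful than the paper, which only asserts $\rho(\hat A_N)<1$ ``for $N$ large enough'' and so tacitly needs the same smallness to get a uniform power bound.
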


\begin{IEEEproof}
By (A1), $\theta\mapsto x^*$ is $L_*$-Lipschitz, so
$\lVert\hat x^*(t)-x^*(t)\rVert_2 \leq L_* \lVert\hat \theta(t)-\theta(t)\rVert_2$.
The true parameter evolves as
\begin{align}\label{eq: A1}
\theta(t)=A^H\theta(N-k)+\sum_{j=0}^{H-1} A^j w_p(t-1-j).
\end{align}
Subtracting \eqref{eq: A1} from \eqref{eq: forward} and applying triangle inequality,
\begin{align}\label{eq: A2}
&\lVert \hat \theta(t)-\theta(t) \rVert_2\leq  \underbrace{\lVert (\hat A_N^H - A^H)\hat\theta(N-k) \rVert_2}_{\text{(I) model error}} + \notag \\
& \underbrace{\lVert A^H(\hat \theta(N\!-\!k)-\theta(N\!-\!k)) \rVert_2}_{\text{(II) anchor error}}  + \underbrace{\lVert \textstyle\sum_{j=0}^{H-1} A^j w_p(t\!-\!1\!-\!j) \rVert_2}_{\text{(III) future process noise}}.
\end{align}

\emph{(I) Model error.} We have
\begin{align}\label{eq: A3}
\hat A_N^H-A^H=\sum_{i=0}^{H-1} \hat A_N^i(\hat A_N-A) A^{H-1-i},
\end{align}
together with (A2) ($\rho(A)<1$) and $\rho(\hat A_N)<1$ with high probability for $N$ large enough that
Lemma~\ref{lem: IV} concentrates, there exist $\bar\rho\in(\rho(A),1)$ and $C_1>0$ with
$\lVert \hat A_N^i\rVert_2,\lVert A^i\rVert_2\leq C_1\bar\rho^{\,i}$. Submultiplicativity then gives
\begin{align}\label{eq: A4}
\lVert \hat A_N^H-A^H\rVert_2\leq C_1 H \bar\rho^{\,H-1}\lVert \hat A_N-A\rVert_2.
\end{align}
Under (A2), $\theta(t)$ is stationary with covariance $\Sigma_\theta$, so
$\mathbb{E}\lVert\theta(N-k)\rVert_2 \leq \sqrt{\trace(\Sigma_\theta)}$. Combining with
\eqref{eq: b6}, \eqref{eq: b4}, and \eqref{eq: b5} gives
\begin{align}\label{eq: theta_bd}
\mathbb{E}\lVert \tilde\theta(N-k)& \rVert_2 \leq \sqrt{\trace(\Sigma_\theta)}+\sqrt{p/\alpha_k} \notag\\
&+ C_1' k \lVert A-I\rVert_2 +C_2'\sqrt{k\,\trace(Q)}=:B_\theta.
\end{align}
 {Conditioning on the identification event of Lemma~\ref{lem: IV},
the bound on $\lVert\hat A_N-A\rVert_2$ is deterministic; combining
\eqref{eq: A4} with \eqref{eq: Arate} and \eqref{eq: theta_bd} via the
Cauchy--Schwarz inequality (applied to the remaining expectation over
$\bar w_{m_{N-k}}$) gives}
\begin{align}\label{eq:A5}
&\mathbb{E}\!\left[
\left\|(\hat A_N^H - A^H)\tilde\theta(N-k)\right\|_2
\right]
\leq
 \notag\\
&
C_1' H \bar\rho^{\,H-1}
\Bigl(
\sqrt{\frac{\log(N/\delta)}{N}} + k\|A-I\|_2 + \sqrt{k\,\operatorname{tr}(Q)}
\Bigr)
\end{align}
on the same identification event, where $C_1'$ includes $C_1$, $B_\theta$, and the constants
from Lemma~\ref{lem: IV}. 

\emph{(II) Anchor error.} Submultiplicativity and \eqref{eq: trade} at $t=N-k$ give
\begin{align}\label{eq: A6}
&\mathbb{E}\lVert A^H(\tilde\theta(N-k)-\theta(N-k))\rVert_2 \notag\\
&\leq \lVert A^H\rVert_2\bigl(\sqrt{p/\alpha_k}+C_1' k\lVert A-I\rVert_2+C_2'\sqrt{k\,\trace(Q)}\bigr).
\end{align}

\emph{(III) Future process noise.} The sum is zero-mean Gaussian with covariance
$\sum_{j=0}^{H-1} A^j Q(A^j)^\T$, so by Jensen's inequality,
\begin{align}\label{eq: A7}
\mathbb{E}\biggl\lVert \textstyle\sum_{j=0}^{H-1} A^j w_p(t\!-\!1\!-\!j)\biggr\rVert_2\! \leq\! \sqrt{\textstyle\trace\sum_{j=0}^{H-1} A^j Q(A^j)^\T}.
\end{align} 
 
Substituting~\eqref{eq:A5},
\eqref{eq: A6}, and~\eqref{eq: A7} into~\eqref{eq: A2}, taking
expectations { over the future process noise and the anchor
measurement-noise window}, and applying the $L_*$-Lipschitz property gives the
result~\eqref{eq: c1} { on the identification event of
Lemma~\ref{lem: IV}, which holds} with probability at least $1-\delta$.
\end{IEEEproof}

\begin{remark}
The three terms in \eqref{eq: c1} behave differently. The model-error term
$C_1' H \bar\rho^{\,H-1}\sqrt{\log(N/\delta)/N}$ vanishes as $N\to\infty$ at the rate
$\sqrt{\log N/N}$ inherited from Lemma~\ref{lem: IV} {, plus a non-vanishing identification-bias contribution of order $k\lVert A-I\rVert_2+\sqrt{k\,\trace(Q)}$ that scales with the window length $k$}. The anchor-error term carries the
prefactor $\lVert A^H \rVert_2$, which is uniformly bounded in $H$ and decays geometrically
under stable dynamics. The prediction floor $\sqrt{\trace\sum_{j=0}^{H-1} A^j Q(A^j)^\T}$
grows with $H$ and saturates at $\sqrt{\trace(\Sigma_\theta)}$, independently of $N$.
Consequently, for short horizons and large $N$, the anchor term dominates and tighter
per-step recovery, via the choice of $k$, is most effective; for long horizons, the
prediction floor dominates, and only a fundamentally smaller process noise $Q$ can reduce
the error further. Throughout, $k$ is treated as a fixed
design parameter (chosen to satisfy $kn\geq p$) and its dependence is
inside the constants $C_1',C_2'$.
\end{remark}

\section{Numerical Examples and Comparisons}
We now present numerical studies that validate the proposed algorithm on two tasks: real-time
trajectory tracking and road-congestion control.

\subsection{Real-Time Trajectory Tracking (Quadratic Cost Function)}\label{eq: RTTT}

\begin{figure*}[!t]
  \centering
  \includegraphics[width=2\columnwidth,trim={0cm 0cm 0cm 0cm},clip]{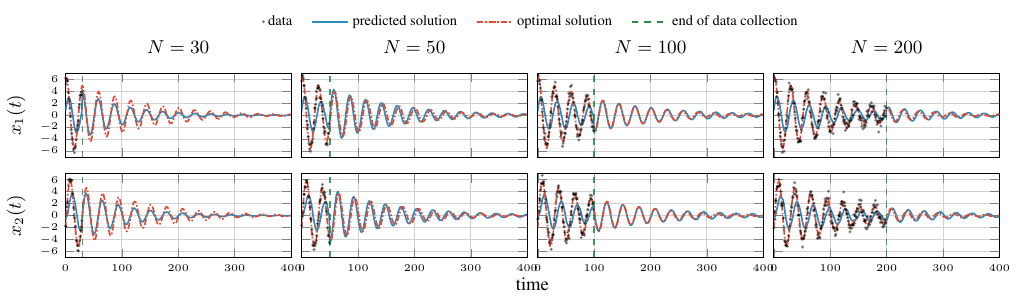}
  \caption{This figure shows the predicted (solid blue line) and the optimal
solution (dashed red line) considering a quadratic cost function for the setting
 in Section~\ref{eq: RTTT} for $N\in\{30,50,100,200\}$ over
  $t\in[0,400]$. Each row corresponds to the
first and the second component of the predicted and the optimal solution,
respectively. Gray dots indicate the measurements collected during the data-collection interval  {$[0,N)$}, and the vertical dashed green line marks the end of this interval. For $t\in[0,N)$, the predicted solution is computed using static gradient descent, whereas for $t\geq N$ it is computed using~\eqref{eq: final}. The results show that static gradient descent exhibits a lag in tracking the optimal solution \cite{ST-AAM-FP:24}, while the proposed method reduces the tracking error as $N$ increases.  
  }
  \label{fig: rmse_vs_N}
\end{figure*}

\begin{figure}[!t]
  \centering
  \includegraphics[width=\columnwidth,trim={0cm 0cm 0cm 0cm},clip]{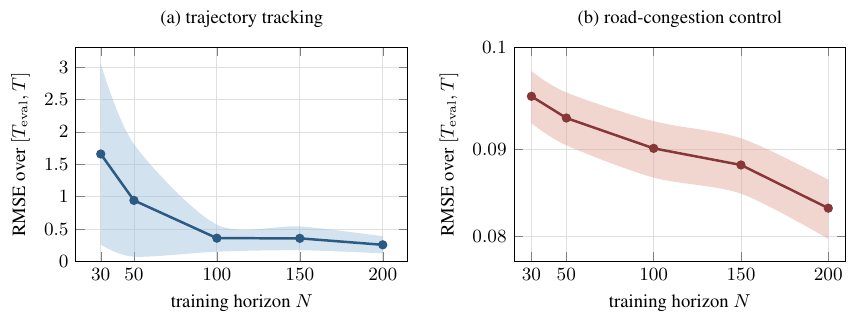}
  \caption{ The figure reports the RMSE of the predicted optimizer over the evaluation
window $[T_{\mathrm{eval}},T]$, for different training horizon $N\in[30,50,100,150,200]$ averaged over $M=30$ Monte Carlo trials.
Panel (a) corresponds to the trajectory-tracking problem of
Section~\ref{eq: RTTT}, evaluated over $[200,400]$, while Panel (b) corresponds
to the road-congestion control problem of Section~\ref{eq: NCF}, evaluated over
$[100,200]$. In both examples, increasing the number of noisy gradient
measurements used for training reduces the future tracking error,
illustrating that the proposed estimator improves the prediction of the
latent parameter dynamics and, consequently, the tracking of the
time-varying optimizer.
 }
  \label{fig: tracking_trajectories}
\end{figure}

We apply the proposed framework to a real-time trajectory tracking task in which a ground
robot tracks an uncrewed aerial vehicle (UAV) with unknown trajectory and unknown
trajectory dynamics, using only noisy gradient measurements. At each time $t$, the robot
solves
\begin{align}\label{eq: tracking_raw}
\min_{x\in\mathbb{R}^2}\ f(x,t) = \bigl(x - b(t)\bigr)^{\!\top} H(t) \bigl(x - b(t)\bigr),
\end{align}
where $x \in \mathbb{R}^2$ is the position of the robot, $b(t)\in\mathbb{R}^2$ is
the unknown UAV position, and
$H(t)=\bigl[\begin{smallmatrix} h_{11}(t) & h_{12}(t)\\ h_{12}(t) & h_{22}(t)\end{smallmatrix}\bigr]\succ 0$
is an unknown symmetric weighting matrix encoding task priorities (e.g., velocity or position). 
Defining $\tilde b(t):=H(t)b(t)$ and  {$\kappa(t):=b(t)^{\!\top} H(t)b(t)$} recasts
\eqref{eq: tracking_raw} into the parametric form  {$f(x,t) = g(x)^{\!\top}\theta(t) + \kappa(t)$}
with
$g(x) = \begin{bmatrix} -2x_1 & -2x_2 & x_1^2 & 2x_1 x_2 & x_2^2 \end{bmatrix}^\T,
\theta(t) = \begin{bmatrix} \tilde b_1(t) & \tilde b_2(t) & h_{11}(t) & h_{12}(t) & h_{22}(t) \end{bmatrix}^\T$.
The bias  {$\kappa(t)$} does not depend on $x$, and the minimizer is recovered in closed form as
$x^\star(t)=b(t)=H(t)^{-1}\tilde b(t)$. The parameter $\theta(t)\in\mathbb{R}^5$ evolves
under unknown linear dynamics \eqref{eq: dynamics} with $A\in\mathbb{R}^{5\times 5}$ and
$w_p(t)\sim\mathcal{N}(0,\sigma_p^2 I_5)$. At each $t\in[0,N)$, the robot receives a
noisy gradient measurement $y(t) = C(x(t))\theta(t) + w_m(t)$ with
$w_m(t)\sim\mathcal{N}(0,\sigma_m^2 I_2)$ and
$C(x) = \begin{bmatrix} -2 & 0 & 2x_1 & 2x_2 & 0 \\ 0 & -2 & 0 & 2x_1 & 2x_2 \end{bmatrix}$.

We set the evaluation horizon $T=400$, evaluation window
$[T_{\mathrm{eval}},T]=[200,400]$, training horizon $N\in\{30,50,100,200\}$, window
 {$k=3$ (so that $kn=6\geq p=5$, as required by (A3))}, measurement noise $\sigma_m=0.60$, and process noise $\sigma_p=0.015$. Each
configuration is run for $M=30$  {Monte Carlo} trials. 
Performance is measured by the
future tracking root-mean-square error,
\begin{align*}
\mathrm{RMSE}
=\biggl[\frac{1}{T-T_{\text{eval}}+1}
\sum_{t=T_{\mathrm{eval}}}^{T}
\|\hat x^\star(t)-x^\star(t)\|_2^2
\biggr]^{1/2}.
\end{align*}

For $t<N$, the robot uses the static gradient
descent, $x(t+1)=x(t)-\eta\, y(t)$ with $\eta=10^{-3}$. For $t\geq N$, the robot estimates $\hat A_N$ from $\{\tilde\theta(t)\}$ via \eqref{eq: IV},
forecasts $\hat\theta(t)=\hat A_N^{\,t-(N-k)}\tilde\theta(N-k)$, recovers
$\hat H(t),\hat{\tilde b}(t)$ from $\hat\theta(t)$, and computes
$\hat x^\star(t)=\hat H(t)^{-1}\hat{\tilde b}(t)$.
Figure~\ref{fig: rmse_vs_N} shows the predicted and optimal trajectories for
each $N$. 
Figure~\ref{fig: tracking_trajectories}(a) reports RMSE versus $N$ over $[T_{\text{eval}},T]$. We observe that the
mean error decreases monotonically as the training horizon grows.

\subsection{Road-Congestion Control (Nonlinear Cost Function)}\label{eq: NCF}

We consider a two-dimensional road-congestion control problem with state
$x(t)=[x_1(t),x_2(t)]^\T\in\mathbb{R}^2$, where the components denote control inputs for
two intersecting road corridors. Congestion is modeled through six directional softplus
features $g_i(x) = \log(1+\exp(a_i^\T x-d_i))$ for $i=1,\ldots,6$, with $d_i=0.5$ and
directions $a_1,\ldots,a_4$ aligned with the coordinate axes and
$a_5=(1,1)/\sqrt{2}$, $a_6=(1,-1)/\sqrt{2}$ capturing the two diagonal corridor
interactions. Let $\theta_i(t)$ denote the unknown, time-varying congestion weight on
the $i$-th feature. The cost function is
\begin{align}
f(x,\theta(t)) = \tfrac{\theta_0(t)}{2}\|x\|_2^2 + \sum_{i=1}^{6}\theta_i(t)\,g_i(x),
\end{align}
with $\theta_0(t)\geq \mu>0$ (ensuring strong convexity) and $\theta_i(t)\geq 0$ for
$i\geq 1$. The parameter vector
$\theta(t) = [\theta_0(t),\ldots,\theta_6(t)]^\T\in\mathbb{R}^7$ evolves under
\eqref{eq: dynamics}. The noisy gradient
$y(x(t),t) = \theta_0(t)\,x(t) + \sum_{i=1}^{6}\theta_i(t)\sigma(a_i^\T x(t)-d_i)\,a_i + w_m(t)$,
where $\sigma(z)=1/(1+e^{-z})$, fits \eqref{eq: measurements} with
$C(x)=[\,x,\;\sigma(a_1^\T x-d_1)\,a_1,\;\cdots,\;\sigma(a_6^\T x-d_6)\,a_6\,]$.

We use $[T_{\mathrm{eval}},T]=[100,200]$, $N=100$, $k=20$, $\sigma_m=0.50$,
$\sigma_p=0.1$, and $M=30$  {Monte Carlo} trials; the remaining simulation settings follow
Section~\ref{eq: RTTT}.
Figure~\ref{fig: congestion_param}(a) shows a representative trajectory of $\theta_1(t)$;
the other components exhibit a similar trend. Panel (b) reports 
$\lVert \hat A_N-A\rVert_F$ versus $N$, which decreases monotonically, consistent with
Lemma~\ref{lem: IV}. Figure~\ref{fig: tracking_trajectories}(b) shows the future tracking
RMSE over the evaluation window, which decreases with $N$.

\begin{figure}[!t]
  \centering
  \includegraphics[width=1\columnwidth,trim={0cm 0cm 0cm 0cm},clip]{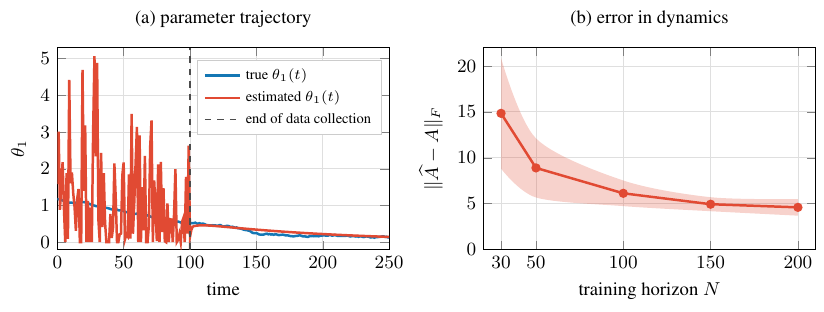}
  \caption{The figure shows the parameter estimation and dynamics-identification performance for the
road-congestion control example. Panel (a) shows a representative
trajectory of the first congestion parameter $\theta_1(t)$. For
$t<N$, the estimate is obtained from the noisy gradient measurements
collected while the algorithm uses static gradient descent. For $t\geq N$,
the red dashed curve shows the forecast generated by the proposed
algorithm using the identified parameter dynamics. The blue solid curve
shows the true parameter trajectory, and the vertical line marks the end
of data collection at $N=100$. Panel (b) reports the Frobenius-norm
identification error $\lVert \hat A_N-A\rVert_F$ as a function of the
training horizon $N\in\{30,50,100,150,200\}$, averaged over $M=30$
Monte Carlo trials. The decreasing trend shows that additional 
measurements improve the identification of the latent dynamics.}
  \label{fig: congestion_param}
\end{figure}

\section{Conclusion}\label{section5}
We studied time-varying optimization problems whose cost functions
depend on latent parameters evolving under unknown linear stochastic
dynamics, while the algorithm has access only to noisy gradient
measurements. We proposed a three-stage method that reconstructs the
latent parameters from windowed gradient measurements, identifies their
dynamics using an instrumental-variable estimator that compensates for
noisy regressors, and forecasts future parameters to compute the
corresponding optimizer. We established a finite-sample tracking-error
bound that separates the effects of parameter reconstruction, dynamics
identification, and future process noise. This bound clarifies how
tracking performance depends on the number of gradient samples, the
prediction horizon, and the stochasticity of the latent dynamics. Future
work includes extensions to
constrained optimization, and generalizations to nonlinear parameter
dynamics.

\bibliographystyle{unsrt}
\bibliography{alias,paper,FP,New}


\end{document}